\newcommand{\dist}{\operatorname{dist}}
\newcommand{\pa}[1]{{\left(#1\right)}}                  
\newcommand{\pair}[1]{g\left(#1\right)}      
\newcommand{\metric}{g}                          
\renewcommand{\hat}[1]{\widehat{#1}}
\renewcommand{\tilde}[1]{\widetilde{#1}}
\renewcommand{\l}{\lambda}
\newtheorem{theorem}{\textbf{Theorem}}[section]
\newtheorem{lemma}[theorem]{\textbf{Lemma}}
\newtheorem{defi}[theorem]{\textbf{Definition}}
\theoremstyle{remark}
\newtheorem{rem}[theorem]{\textbf{Remark}}
\newtheorem{exe}[theorem]{\textbf{Example}}
\numberwithin{equation}{section}
\title[]
{Elliptic and parabolic equations \\ with Dirichlet conditions at infinity \\ on Riemannian manifolds}
\date{\today} \linespread{1.2}
\keywords{Elliptic equations, parabolic equations, Dirichlet
conditions at infinity, Riemannian manifolds}
\subjclass[2010]{35J25, 35J67, 35K10, 35K20, 58J05, 58J32, 58J35}
\begin{document}

\maketitle
\begin{center}
\textsc{P. Mastrolia\footnote{Universit\`{a} degli Studi di Milano,
Italy. Email: paolo.mastrolia@gmail.com.}, D. D.
Monticelli\footnote{Politecnico di Milano, Italy. Email:
dario.monticelli@polimi.it.} and F. Punzo\footnote{Universit\`{a}
della Calabria, Italy. Email: fabio.punzo@unical.it. \\ P.
Mastrolia, D.D. Monticelli and F. Punzo are members of the Gruppo
Nazionale per l'Analisi Matematica, la Probabilit\`{a} e le loro
Applicazioni (GNAMPA) of the Istituto Nazionale di Alta Matematica
(INdAM). The research that led to the present paper was partially
supported by the grant ``Analisi Globale, PDE's e Strutture
Solitoniche'' of the group GNAMPA of INdAM.}}
\end{center}

\begin{abstract}
We investigate existence and uniqueness of bounded solutions of
parabolic equations with unbounded coefficients in $M\times \mathbb
R_+$, where $M$ is a complete noncompact Riemannian manifold. Under
specific assumptions, we establish existence of solutions satisfying
prescribed conditions at infinity, depending on the direction along
which infinity is approached. Moreover, the large-time behavior of
such solutions is studied. We consider also elliptic equations on
$M$ with similar conditions at infinity.
\end{abstract}

\bigskip

\section{Introduction}\setcounter{equation}{0}
We are concerned with bounded solutions of linear elliptic equations of the type
\begin{equation}\label{e1}
 a \Delta u \, + \, c u\,=\,f\quad \textrm{in}\;\; M\,,
\end{equation}
where $M$ is a complete, $m$--dimensional, noncompact Riemannian
manifold with metric $g$, $\Delta$ is the Laplace-Beltrami operator
with respect to $g$; furthermore, $a>0, c\leq 0,\, c, f \in
L^\infty(M)$. Observe that, at {\it infinity}, the function $a$ can
be unbounded, or it can tend to $0$, or it needs not to have a
limit.

Moreover, we study bounded solutions of linear parabolic Cauchy
problems of the following form
\begin{equation}\label{e2}
\left\{
\begin{array}{ll}
\,   \, \partial_t u = a \Delta u\, +\,  c u\, + f
&\textrm{in}\,\,S:=M\times (0, \infty),
\\& \\
\textrm{ }u \, = u_0& \textrm{in\ \ } M\times \{0\} \,,
\end{array}
\right.
\end{equation}
where $u_0\in L^\infty(M)$. Precise assumptions on $a, c\,, f$, and $u_0$ will be made in Section \ref{mr} below.

\smallskip

Existence and uniqueness of solutions of elliptic equations and of
parabolic problems have been largely investigated, in the case
$M=\mathbb R^m$ (see e.g. \cite{BK}, \cite{IKO}, \cite{KPT},
\cite{KPu1}, \cite{KPu2}, \cite{Mura1}, \cite{Mura2}, \cite{Pinch2},
\cite{Pinch}). In particular, in \cite{KPu1, KPu2} for suitable
classes of elliptic and parabolic equations, it is shown that it is
possible to prescribe Dirichlet type conditions at {\it infinity}. More
precisely, one can impose that the solutions at infinity, along
radial directions, approach any given continuous function defined on the unit
sphere $\mathbb S^{m-1}\subset \mathbb R^m$. It is also observed that in $\mathbb R^m$ such results
cannot hold in general for operators of the form appearing in
equation \eqref{e1} or in problem \eqref{e2}.

\smallskip

The situation is quite different on negatively curved Riemannian
manifolds. In fact, in \cite[Theorem 3.2]{Anderson} (see also
\cite{Anderson, AS, Choi, Sullivan}) it is shown that if $M$ is a
complete, simply connected Riemannian manifold with sectional
curvatures bounded between two negative constants, then for every
continuous function $\gamma$ on the sphere at infinity $S_\infty(M)$
there exists a unique solution $u$ of equation
\begin{equation}\label{ef11}
\Delta u = 0 \quad \textrm{in}\;\; M\,,
\end{equation}
that is equation \eqref{e1} with $c=f=0$,
such that $u=\gamma$ on $S_\infty(M)$.   In
this kind of results the Martin boundary plays a prominent role, and theoretical potential theory is heavily exploited. Indeed, more general elliptic
equations are considered, but always without any external forcing term $f$ and zero order term $c$.  Note that the presence of a zero order term in
equation \eqref{e1} may remarkably alter the situation. Indeed, let $\lambda>0$ be a constant; from \cite[Theorem 6.2]{Grig} it follows that equation
\begin{equation}\label{ef10}
\Delta u - \lambda u \,=\, 0\quad \textrm{in}\;\; M\,,
\end{equation}
that is equation \eqref{e1} with $f=0$ and $c=-\lambda$, admits a
unique bounded solution, if $M$ is the hyperbolic space $\mathbb
H^m$. Hence the result in \cite{Anderson}, which we recalled above,
does not hold for equation \eqref{ef10}, as it would imply
nonuniqueness of bounded solutions of equation \eqref{ef10}.

Some general results concerning conditions at infinity for solutions of parabolic equations on Riemannian manifold are established in \cite{Mura1, Mura2}. The Martin boundary is used; moreover, a representation formula is derived
for positive solutions of the Cauchy problem, associated to divergence form elliptic operators.

\smallskip

In this paper, under suitable assumptions on $a$ and $M$ (see {\bf
(HP1)} below), we prove existence of solutions of the elliptic
equation \eqref{e1} satisfying prescribed conditions at infinity.
More precisely, consider on $M$ the polar coordinates $(r,\theta)\in
(0,\infty)\times \mathbb S^{m-1}$, with respect to some fixed origin
$o\in M$; here $\mathbb S^{m-1}:=\{x\in \mathbb R^m\,:\, |x|=1\}$,
see also Section \ref{RG}. Define
\begin{equation}\label{e8}
\mathcal A:=\Big\{f\in C^\infty((0,\infty))\cap C^1([0,\infty)):
f'(0)=1,\, f(0)=0,\, f>0\text{ in } (0,\infty)\Big\}.
\end{equation}
We always make the following assumption:

\smallskip

\noindent {\bf (HP1).} $(i)$ There exist a point $o\in M$ with
$\operatorname{Cut}(o)=\emptyset$, i.e. with empty
\textit{cut-locus}, a constant $R_0>0$ and a function $\underline
a\in C([R_0, \infty))$ such that
\[ a(x) \geq \underline a (r)>0 \quad \textrm{for all}\,\, x\in M\,\, \textrm{with}\,\, r=\operatorname{dist}(x,o)\geq R_0\,;\]

$(ii)$   there exists a function $\psi\in \mathcal A$ such that
\[\operatorname{K}_\omega(x)\, \leq -\frac{\psi''(r)}{\psi(r)}\quad \textrm{for all}\;\; x=(r, \theta)\in M\setminus\{o\}\,,\]
\[\int_1^\infty \frac{dr}{\psi^{m-1}(r)}<\infty\,,\;\quad
\int_1^{\infty}\left(\int_{r}^\infty\frac{d\xi}{\psi^{m-1}(\xi)}\right)\frac
{\psi^{m-1}(r)}{\underline a(r)}\,dr \,<\, \infty\,.\] Here
$K_\omega(x)$ denotes the radial sectional curvature at $x$ (see
Section \ref{RG}).

\smallskip

From  {\bf (HP1)} it follows that in our result there is an interplay between the coefficient $a(x)$ and the manifold $M$, through the function $\psi$ which is in turn related to the radial sectional curvature. Observe that if $a(x)\equiv 1$, then condition {\bf (HP1)} implies
that $M$ is stochastically incomplete (see e.g. \cite{Grig}).
Moreover, it is direct to see that if {\bf (HP1)} holds, then $M$ is
non-parabolic, i.e. it admits a positive Green function
$G(x,y)<\infty$ for every $x,y\in M$, $x\neq y$; indeed, by
\cite[Theorem 4.2]{Grig2},
\[G(x,o) \leq \tilde C \int_{r(x)}^\infty\frac{d\xi}{\psi^{m-1}(\xi)}\quad (x\in M\setminus\{o\})\,.\]
Thus, we also have that for some compact subset $K\subset M$,
\[\int_{M\setminus K} \frac{G(x,o)}{a(x)}\,d\mu(x) <\infty\,.\]

\smallskip

Under suitable additional hypotheses on the coefficient $a(x)$ (see
conditions {\bf (HP0)} and \eqref{e13} below), we show that, for any
$\gamma\in C(\mathbb S^{m-1})$, there exists a unique solution of
the elliptic equation \eqref{e1} satisfying
\begin{equation}\label{e3}
\lim_{r\to \infty} u(r, \theta)=\gamma(\theta)\quad \textrm{uniformly w.r.t.}\;\; \theta\in \mathbb S^{m-1}\,.
\end{equation}
Note that condition \eqref{e3} can be regarded as a Dirichlet
condition at infinity, depending on the direction along which
infinity is approached.

Moreover, for any given function $\tilde \gamma\in C(\mathbb S^{m-1}\times[0, \infty))$, we prove that there exists a unique solution of problem
\eqref{e2} such that
\begin{equation}\label{e4}
\textrm{for each}\,\,T>0, \, \lim_{r\to \infty} u(r,\theta, t)=\tilde \gamma(\theta, t)\quad \textrm{uniformly w.r.t.}\;\; \theta\in \mathbb S^{m-1},
t\in [0,T]\,,
\end{equation}
provided
\begin{equation}\label{e5}
\lim_{r\to \infty} u_0(r, \theta)\,=\,\tilde \gamma(\theta, 0)\quad \textrm{uniformly w.r.t.}\;\; \theta\in \mathbb S^{m-1}\,.
\end{equation}
Note again that condition \eqref{e4} can be regarded as a
time-dependent Dirichlet condition at infinity, depending on the
direction along which infinity is approached.

\smallskip
We should note that when $\psi(r)=r$, and thus $M=\mathbb R^m$, our result cannot be applied (see Remark \ref{oss1f} below); this is in accordance with remarks made above (see \cite{KPu1}). On the other hand, we want to stress that our results are completely new also for problem
\begin{equation}\label{e2ef}
\left\{
\begin{array}{ll}
\,   \, \partial_t u =  \Delta u\
&\textrm{in}\,\,S
\\& \\
\textrm{ }u \, = u_0& \textrm{in\ \ } M\times \{0\} \,,
\end{array}
\right.
\end{equation}
i.e. problem \eqref{e2} with $a\equiv 1, c\equiv f\equiv 0$.

\smallskip
In order to obtain existence of solutions to problem \eqref{e1}
satisfying \eqref{e3}, we construct and use suitable {\it barrier
functions} at infinity (see Section \ref{SecBar} below).
Furthermore, by means of such barriers, we construct convenient
subsolutions and supersolutions, also depending on the time variable
$t$, in order to prescribe condition \eqref{e4} for solutions of
problem \eqref{e2}. We explicitly note that in order to construct
such barriers a prominent role is played by  {\bf (HP1)}. However,
the same existence results that we prove hold also on more general
Riemannian manifolds, if one a priori assumes the existence of such
barriers.

A similar approach has been used in \cite{Choi}, where
barriers which indeed are subharmonic functions have been exploited.
In fact, in \cite{Choi} it has been shown the existence of solutions
satisfying Dirichlet conditions at infinity only for equation \eqref{ef11}; moreover, it is supposed that $M$ is a spherically symmetric manifold with negative radial sectional curvature satisfying a suitable bound from above (see Section \ref{RG}).

\smallskip

The paper is organized as follows. In Section \ref{RG} we introduce
some basic notions and tools from Riemannian geometry, while in
Section \ref{mr} we state our main results, see Theorems \ref{teor1}
and \ref{teor2}. Section \ref{SecBar} is devoted to the construction
of suitable barrier functions at infinity, which are then used in
Sections \ref{dim1} and \ref{dim2} in the proofs of existence and
uniqueness of bounded solutions of problems \eqref{e1} and
\eqref{e2} with prescribed conditions at infinity. Finally, Section
\ref{SecEXE} contains some examples and applications of our main
theorems

\section{Preliminaries}\label{RG}
In this section we collect some notions and results from Riemannian Geometry following \cite{Grig, MRS}.

Let $\pa{M, \metric}$ be a Riemannian manifold of dimension $m$ with
metric $\metric$. Let $p\in M$ and let $(U, \varphi)$ be a
\emph{local chart} such that $p\in U$. Denote by $x^1,\ldots, x^m$,
$\,m=\operatorname{dim}\, M$ , the coordinate functions on $U$.
Then, at any $q\in U$ we have
\begin{equation}
\label{GP1.1} \metric=g_{ij}\,dx^{i} dx^{j}
\end{equation}
where $dx^{i}$ denotes the differential of the function $x^{i}$ and
$g_{ij}$ are the (local) components of the metric defined by
$g_{ij}=\pair{\frac{\partial}{\partial x^{i}},
\frac{\partial}{\partial x^{j}}}$. Its inverse will be denoted by
$g^{ij}$. In equation \eqref{GP1.1} and throughout this section we
adopt the Einstein summation convention over repeated indices.

Note that the Laplacian of a function $u\in C^2(M)$ has locally the
form
\begin{equation}\label{e27}
\Delta f= \mathfrak g^{-1}\frac{\partial}{\partial x_i}\left(\mathfrak g \, g^{ij} \frac{\partial f}{\partial x_j}\right),
\end{equation}
where
\[\mathfrak g=\sqrt{\rm{det}(g_{ij})}\,.\]

\smallskip

Now, fix a point $o\in M$ and denote by $\textrm{Cut}(o)$ the {\it
cut locus } of $o$. For any $x\in M\setminus
\big[\textrm{Cut}(o)\cup \{o\} \big]$, one can define the {\it polar
coordinates} with respect to $o$, see e.g. \cite{Grig}. Namely, for
any point $x\in M\setminus \big[\textrm{Cut}(o)\cup \{o\} \big]$
there correspond a polar radius $r(x) :=\operatorname{dist}(x, o)$
and a polar angle $\theta\in \mathbb S^{m-1}$ such that the shortest
geodesics from $o$ to $x$ starts at $o$ with direction $\theta$ in
the tangent space $T_oM$. Since we can identify $T_o M$ with
$\mathbb R^m$, $\theta$ can be regarded as a point of $\mathbb
S^{m-1}.$ For any $x_0\in M$ and for any $R>0$ we set $B_R(x_0):=
\big\{x\in M\,:\, \operatorname{dist}(x, x_0)<R\,\big\};$ in
addition, we denote by $d\mu$ the Riemannian volume element on $M$,
and by $S(x_0, R)$ the area of the sphere $\partial B_R(x_0)$.

The Riemannian metric in $M\setminus\big[\textrm{Cut}(o)\cup \{o\} \big]$ in polar coordinates reads
\[g= dr^2+A_{ij}(r, \theta)d\theta^i d\theta^j, \]
where $(\theta^1, \ldots, \theta^{m-1})$ are coordinates in $\mathbb S^{m-1}$ and $(A_{ij})$ is a positive definite matrix. Let $(A^{ij})$ denote the
inverse matrix of $(A_{ij})$. It is not difficult to see that the Laplace-Beltrami operator in polar coordinates has the form
\begin{equation}\label{e6} \Delta = \frac{\partial^2}{\partial r^2} +
\mathcal F(r, \theta)\frac{\partial}{\partial r}+\Delta_{S_{r}},
\end{equation}
where $\mathcal F(r, \theta):=\frac{\partial}{\partial
r}\big(\log\sqrt{A(r,\theta)}\big)$, $A(r,\theta):=\det
(A_{ij}(r,\theta))$, $\Delta_{S_r}$ is the Laplace-Beltrami operator
on the submanifold $S_{r}:=\partial B_r(o)\setminus
\textrm{Cut}(o)$\,.

$M$ is a {\it manifold with a pole}, if it has a point $o\in M$ with $\textrm{Cut}(o)=\emptyset$. The point $o$ is called {\it pole} and the polar
coordinates $(r,\theta)$ are defined in $M\setminus\{o\}$.

A manifold with a pole is a {\it spherically symmetric manifold} or a {\it model}, if the Riemannian metric is given by
\begin{equation}\label{e7}
g= dr^2+\psi^2(r)d\theta^2,
\end{equation}
where $d\theta^2=\beta_{ij}d\theta^i d \theta^j$ is the standard
metric in $\mathbb S^{m-1}$, $\beta_{ij}$ being smooth functions of
$\theta^1, \ldots, \theta^{m-1},$ and $\psi\in \mathcal A$, with
being defined in \eqref{e8}. In this case, we write $M\equiv
M_\psi$; furthermore, we have $\sqrt{A(r,\theta)}=\psi^{m-1}(r)$, so
that
\[\Delta = \frac{\partial^2}{\partial r^2}+ (m-1)\frac{\psi'}{\psi}\frac{\partial}{\partial r}+ \frac1{\psi^2}\Delta_{\mathbb S^{m-1}}\,,\]
where $\Delta_{\mathbb S^{m-1}}$ is the Laplace-Beltrami operator in
$\mathbb S^{m-1}\,.$ In addition, the boundary area of the geodesic
sphere $\partial S_R$ is computed by
\[S(o, R)=\omega_m\psi^{m-1}(R),\]
$\omega_m$ being the area of the unit sphere in $\mathbb R^m$. Also, the volume of the ball $B_R(o)$ is given by
\[\mu(B_R(o))=\int_0^R S(o, \xi)\,d\xi\,. \]

Observe that for $\psi(r)=r$, $M=\mathbb R^m$, while for $\psi(r)=\sinh r$, $M$ is the $m-$dimensional hyperbolic space $\mathbb H^m$.

Let us recall comparison results for sectional and Ricci curvatures
that will be used in the sequel. Let
$\operatorname{Cut}^*(o)=\operatorname{Cut}(o)\cup\{o\}$ and, for
any $x\in M\setminus\operatorname{Cut}^*(o)$, denote by
$\textrm{Ric}_o(x)$ the {\it Ricci curvature} at $x$ in the
direction $\frac{\partial}{\partial r}$. Let $\omega$ denote any
pair of tangent vectors from $T_xM$ having the form
$\left(\frac{\partial}{\partial r} ,X\right)$, where $X$ is a unit
vector orthogonal to $\frac{\partial}{\partial r}$. Denote by
$\textrm{K}_{\omega}(x)$ the {\it sectional curvature} at the point
$x$ of the $2$-section determined by $\omega$. Observe that (see
\cite[Section 15]{Grig}, \cite{Ichi1}, \cite{Ichi2}), if
$\operatorname{Cut}(o)=\emptyset$ and
\begin{equation}\label{e9}
\textrm{K}_{\omega}(x)\leq -\frac{\psi''(r)}{\psi(r)}\quad \textrm{for all}\;\; x=(r,\theta)\in M\setminus\{o\},
\end{equation}
for some function $\psi\in \mathcal A$, then
\begin{equation}\label{e10}
\mathcal F(r, \theta)\geq (m-1)\frac{\psi'(r)}{\psi(r)}\quad
\textrm{for all}\;\; r>0,\, \theta \in \mathbb S^{m-1}\,.
\end{equation}

On the other hand, if
\begin{equation}\label{e11}
\textrm{Ric}_{o}(x)\geq -(m-1)\frac{\phi''(r)}{\phi(r)}\quad \textrm{for all}\;\; x=(r,\theta)\in M\setminus\operatorname{Cut}^*(o),
\end{equation} for some function $\phi\in \mathcal A$, then
\begin{equation}\label{e12}
\mathcal F(r, \theta)\leq (m-1)\frac{\phi'(r)}{ \phi(r)}\quad \textrm{for all}\;\; r>0, \theta \in \mathbb S^{m-1}\,\,\textrm{with}\,\, x=(r,
\theta)\in M\setminus \operatorname{Cut}^*(o)\,.
\end{equation}

Note that if $M_\psi$ is a model manifold, then for any $x=(r,
\theta)\in M_\psi\setminus\{o\}$
\[\textrm{K}_{\omega}(x)=-\frac{\psi''(r)}{\psi(r)},\]
and
\[\textrm{Ric}_{o}(x)=-(m-1)\frac{\psi''(r)}{\psi(r)}\,.\]

Recall that a Riemannian manifold $M$ is said to be {\it non-parabolic} if it admits a nonconstant positive superharmonic function, and {\it
parabolic} otherwise (see e.g. \cite{Grig}). Observe that $M$ is non-parabolic if and only if it admits a positive Green function $G(x,y)<\infty$ for
every $x,y \in M, x \neq y;$ moreover,
\begin{equation*}\label{e12a}
\int_1^{\infty}\frac{d\xi}{S(o, \xi)}=\infty,
\end{equation*}
for some $o\in M$, if and only if $M$ is parabolic (see \cite[Theorem 7.5, Corollary 15.2]{Grig}).

In the sequel, we also consider {\it Cartan-Hadamard} Riemannian manifolds, i.e. simply connected complete noncompact Riemannian manifolds with
nonpositive sectional curvatures. Observe that (see, e.g. \cite{Grig}, \cite{Grig3}) on Cartan-Hadamard manifolds we have
$\operatorname{Cut}(o)=\emptyset$ for any $o \in M.$

\section{Existence and uniqueness results}\label{mr}
\setcounter{equation}{0} Before stating our main results, we need some preliminary materials. Concerning the coefficients of the operator $\mathcal
L$, $c$ and $f$ we make the following set of assumptions:
\[
\textrm{\ \ } \left\{
\begin{array}{l}
\textrm{(i)} \quad \; a\in C^{\sigma}_{\textrm{loc}}(M)\;\,\textrm{for some}\;\, \sigma\in (0,1),\; a>0 \,\, \textrm{in}\;\; M;
\\ \textrm{(ii)} \;\;\;\,  c, f\in C_{\textrm{loc}}^{\sigma}(M)\cap L^\infty(M)\,.
\end{array}
\right. \leqno(\textrm{\bf{HP0}}) \] Note that the coefficient $a$ can be unbounded at infinity.

\medskip
For any $R>0, \delta>0, \theta_0\in \mathbb S^{m-1}$ set
\begin{equation}\label{e510}
\mathcal C^R_{\theta_0, \delta}:=\Big\{ x\equiv (r, \theta)\in
M\,:\, r>R, \,\dist_{\mathbb S^{m-1}}(\theta, \theta_0)
<\delta\,\Big\}\,,\end{equation} where $\dist_{\mathbb
S^{m-1}}(\theta, \theta_0)$ denotes the geodesic distance on
$\mathbb S^{m-1}$ between $\theta$ and $\theta_0$.

\medskip

Subsolutions, supersolutions and solutions of equation \eqref{e1} and of problem \eqref{e2} are meant as follows.

\begin{defi}
A function $u\in C^2(M)$ is a {\em subsolution} of equation \eqref{e1} if
$$  a\Delta u(x)\,+\, c(x) u(x)\, \geq \,
f(x)\quad \textrm{for any}\,\;\; x\in M\,.$$ A {\em supersolution} is defined replacing the previous ``$\,\geq \,$'' with ``$\,\leq\,$''. Finally, a
{\em solution} is both a subsolution and a supersolution.
\end{defi}

\begin{defi}
A function $u\in C_{x,t}^{2,1}(M\times (0, \infty))\cap C(M\times
[0, \infty))$ is a {\em subsolution} of problem \eqref{e2} if
\begin{itemize}
\item[(i)] $\partial_t u(x,t) \leq a\Delta u(x,t)\, +\,  c(x) u(x,t)\, + f(x)\quad \textrm{for any}\,\;\; x\in M,\, t\in (0, \infty)$,\;
\item[(ii)]
$u(x,0)\leq u_0(x)\quad \textrm{for any}\;\, x\in M\,.$
\end{itemize}
A {\em supersolution} is defined replacing the previous two ``$\,\leq\,$'' with ``$\,\geq\,$''. Finally, a {\em solution} is both a subsolution and a
supersolution.
\end{defi}

In the following, the function
\[\omega(r):= \max_{i,j=1, \ldots, m-1,\\\
\theta\in \mathbb S^{m-1}} \left\{\left|\frac{\partial A^{ij}(r,
\theta)}{\partial \theta^i}\right|+\frac 1 2 \frac{|A^{ij}(r,
\theta)|}{A(r, \theta)}\left|\frac{\partial A(r, \theta)}{\partial
\theta^i}\right| + |A^{ij}(r, \theta)|\right\} \quad (r>0)
\]
will play an important role.

Our first result concerns the existence and uniqueness of solutions of elliptic equations with prescribed conditions at infinity.
\begin{theorem}\label{teor1}
Let assumptions {\bf (HP0)}-{\bf (HP1)} be satisfied. Let $\gamma\in
C(\mathbb S^{m-1})$ and $c\leq 0$. Suppose that, for some $C_0>0$,
$R_0>0$
\begin{equation}\label{e13}
\frac 1{\underline a(r)} \geq C_0 \, \omega(r) \quad \textrm{for
all}\;\; x=(r, \theta)\in M,\, r\geq R_0\,.
\end{equation}
Then there exists a unique solution of equation \eqref{e1} such that condition \eqref{e2} is satisfied.
\end{theorem}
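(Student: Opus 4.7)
The plan is to combine an exhaustion argument with a barrier construction at infinity, and to derive uniqueness from a weak maximum principle. First, extend $\gamma\in C(\mathbb S^{m-1})$ to a bounded continuous $\tilde\gamma$ on $M$ by setting $\tilde\gamma(r,\theta)=\chi(r)\gamma(\theta)$, with $\chi$ a smooth radial cutoff equal to $1$ for $r\ge R_0$. For each $R>R_0$, hypothesis \textbf{(HP0)} and $c\le 0$ guarantee a unique classical solution $u_R\in C^{2,\sigma}(\overline{B_R(o)})$ of $a\Delta u_R+cu_R=f$ in $B_R(o)$ with $u_R=\tilde\gamma$ on $\partial B_R(o)$; the weak maximum principle yields a bound $\|u_R\|_\infty\le\|\tilde\gamma\|_\infty+C\|f\|_\infty$ uniform in $R$. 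Interior Schauder estimates then produce $C^{2,\sigma}_{\mathrm{loc}}$ compactness, so a subsequence converges to a bounded solution $u\in C^{2,\sigma}_{\mathrm{loc}}(M)\cap L^\infty(M)$ of \eqref{e1}.

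The crux is showing that $u$ satisfies the asymptotic condition \eqref{e3} (the label ``\eqref{e2}'' in the statement is evidently a typographical slip for \eqref{e3}). For every $\theta_0\in\mathbb S^{m-1}$ and $\varepsilon>0$, I plan to construct a nonnegative $C^2$ barrier $W=W_{\theta_0,\varepsilon}$ on a cone $\mathcal C^{R_1}_{\theta_0,\delta}$ such that: $\gamma(\theta_0)+\varepsilon+W$ is a supersolution of \eqref{e1} in that cone; $W\ge 2(\|u\|_\infty+\|\gamma\|_\infty)$ on the inner and lateral boundary $\{r=R_1\}\cup\{\dist_{\mathbb S^{m-1}}(\theta,\theta_0)=\delta\}$; and $W(r,\theta)\to 0$ as $r\to\infty$ uniformly on the narrower cone $\{\dist_{\mathbb S^{m-1}}(\theta,\theta_0)\le\delta/2\}$. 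A natural ansatz combines the radial profile $\Psi(r):=\int_r^\infty\psi^{1-m}(\xi)\,d\xi$, finite and decreasing to $0$ by \textbf{(HP1)}(ii), with a smooth angular cutoff $\eta\in C^\infty(\mathbb S^{m-1})$ that vanishes at $\theta_0$ and is bounded below outside its $\delta$-neighbourhood. Using the polar expression \eqref{e6} and the lower bound \eqref{e10} on $\mathcal F$ (which comes from the curvature hypothesis in \textbf{(HP1)}(ii)), the radial Laplacian of $\Psi$ contributes a negative term in $a\Delta W$ of order $-a(r)\,\psi^{1-m}(r)$, whereas the angular piece $a\Delta_{S_r}\eta$ is pointwise controlled by $a(r)\,\omega(r)$; hypothesis \eqref{e13}, namely $1/\underline a\ge C_0\,\omega$, is precisely what makes the radial negativity dominate the angular contribution uniformly on the cone, while the second integrability condition in \textbf{(HP1)}(ii) provides enough slack to absorb the bounded zero-order term and the forcing $f$. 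A symmetric lower barrier $\gamma(\theta_0)-\varepsilon-W$ is built in the same way; comparing $u_R$ with both barriers on $\mathcal C^{R_1}_{\theta_0,\delta}\cap B_R(o)$ via the weak maximum principle and letting $R\to\infty$ yields $|u(r,\theta)-\gamma(\theta_0)|\le\varepsilon+W(r,\theta)$ in the narrower cone; a finite angular covering of $\mathbb S^{m-1}$ together with the continuity of $\gamma$ then delivers \eqref{e3}.

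For uniqueness, if $u_1,u_2$ are two bounded solutions sharing the same radial limit $\gamma$, their difference $w=u_1-u_2$ solves $a\Delta w+cw=0$ in $M$ with $w\to 0$ uniformly at infinity; since $c\le 0$ and $a>0$, the weak maximum principle on $B_R(o)$ gives $\sup_{B_R}|w|\le\sup_{\partial B_R}|w|$, whose right-hand side tends to $0$ as $R\to\infty$, so $w\equiv 0$. The main obstacle I anticipate is the barrier construction itself: the possibly unbounded growth of $a$ at infinity makes the angular derivatives of $\eta$, amplified by $\omega$, compete with the radial supersolution margin produced by $\Psi$, and calibrating the coefficients in $W$ together with the geometric parameters $\delta$ and $R_1$ so that this competition goes in the right direction is exactly where the interplay between the geometry of $M$ (through $\psi$ and the sectional curvature bound) and the coefficient $a$ (through \eqref{e13}) becomes indispensable; without the quantitative coupling \eqref{e13}, pure radial barriers cannot pin down $u$ in a direction-dependent fashion, which is consistent with the failure of this kind of result on $M=\mathbb R^m$ with $a\equiv 1$.
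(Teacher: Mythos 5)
Your overall architecture — exhaustion on balls with boundary datum $\gamma(\theta)$, comparison with direction-dependent barriers on truncated cones $\mathcal C^{R}_{\theta_0,\delta}\cap B_j$, and uniqueness via the maximum principle on large balls using the uniform decay of $w=u_1-u_2$ — is the same as the paper's. But the heart of the argument, the barrier, does not work as you have designed it. You take the radial profile $\Psi(r)=\int_r^\infty\psi^{1-m}(\xi)\,d\xi$ and claim that its radial Laplacian contributes a negative term of order $-a(x)\psi^{1-m}(r)$. It does not: since $\Psi'=-\psi^{1-m}$ and $\Psi''=(m-1)\psi^{-m}\psi'$, formula \eqref{e6} together with \eqref{e10} gives $\Delta\Psi=\psi^{1-m}\big[(m-1)\tfrac{\psi'}{\psi}-\mathcal F\big]\le 0$, with \emph{equality} on a model manifold. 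So $a\Delta\Psi$ supplies no strictly negative quantity at all, while the angular piece contributes $a(x)\Delta_{S_r}\eta$, which by \eqref{e13} is only bounded \emph{above} by a constant times $a(x)\,\omega(r)\le \frac{C}{C_0}\,\frac{a(x)}{\underline a(r)}$ — a positive and possibly unbounded term. Nothing in your ansatz dominates it, even in the case $c=f\equiv 0$, and no calibration of the constants, of $\delta$, or of $R_1$ can fix this. What is needed is a radial profile solving the inhomogeneous radial equation: the paper (Lemma \ref{lemma3}) takes $V(r)=\Psi(r)\int_0^r a_0\psi^{m-1}-\int_0^r\Psi(t)a_0(t)\psi^{m-1}(t)\,dt-H$ with $a_0\simeq 1/\underline a$, which satisfies $V''+(m-1)\tfrac{\psi'}{\psi}V'=-a_0(r)$ and hence $a\Delta V\le -a(x)a_0(r)\le-\mathrm{const}\cdot\frac{a(x)}{\underline a(r)}\le-\mathrm{const}$; this negative term has exactly the same size as the angular one, so that $h=\hat C V+\dist^2_{\mathbb S^{m-1}}(\theta,\theta_0)$ becomes a genuine supersolution of $a\Delta h=-1$ on the cone (Lemma \ref{lemma4}). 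The boundedness and decay at infinity of this $V$ is precisely where the second integrability condition in {\bf (HP1)}$(ii)$, $\int_1^\infty\Psi(r)\frac{\psi^{m-1}(r)}{\underline a(r)}\,dr<\infty$, is used — not, as you suggest, to "absorb the zero-order term and the forcing''.

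A second, related gap: your uniform-in-$R$ bound $\|u_R\|_\infty\le\|\tilde\gamma\|_\infty+C\|f\|_\infty$ is not a consequence of the weak maximum principle alone. With $c\le 0$ possibly vanishing and $f\not\equiv 0$, the standard estimate on $B_R$ carries a constant depending on $R$ and on $\inf_{B_R}a$, and nothing prevents it from blowing up as $R\to\infty$. The paper obtains the uniform bound by comparing $u_j$ with $\pm\bar C(V+1)$, i.e. again via the global bounded supersolution of $a\Delta V\le-1$ from Lemma \ref{lemma3}. So both the a priori bound and the asymptotic condition \eqref{e3} hinge on the same construction that is missing from your proposal; the uniqueness part and the covering/uniform-continuity argument for $\gamma$ are fine.
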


\smallskip


Moreover, concerning the parabolic problem \eqref{e2}, we have the
following result.
\begin{theorem}\label{teor2}
Let assumptions {\bf (HP0)}-{\bf (HP1)} be satisfied. Let $\tilde
\gamma\in C(\mathbb S^{m-1}\times[0,\infty))$, and $u_0\in C(M)\cap
L^\infty(M)$. Suppose that conditions \eqref{e5} and \eqref{e13} are
satisfied. Then there exists a unique solution of problem \eqref{e2}
such that condition \eqref{e4} is satisfied.
\end{theorem}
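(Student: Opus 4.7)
The plan is to mirror the argument of Theorem~\ref{teor1}, adapted to the parabolic setting: obtain $u$ as a limit of solutions to Dirichlet problems on an exhaustion of $M$ by geodesic balls, and deploy the barrier machinery of Section~\ref{SecBar} to pin the solution to $\tilde\gamma$ at infinity, uniformly on each time slab $[0,T]$.

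For uniqueness, given two bounded solutions $u_1,u_2$ of \eqref{e2} both satisfying \eqref{e4}, set $v:=u_1-u_2$, so that $\partial_t v=a\Delta v+cv$ on $S$ with $v(\cdot,0)\equiv 0$. Since $c\in L^\infty(M)$, the substitution $\tilde v=e^{-\lambda t}v$ with $\lambda\ge\|c\|_\infty$ produces a function satisfying a parabolic equation whose zero-order coefficient is nonpositive. For any $T>0$ and $\varepsilon>0$, by \eqref{e4} there exists $R>0$ with $|\tilde v|\le\varepsilon$ on $\{r\ge R\}\times[0,T]$; the weak maximum principle on the relatively compact cylinder $B_R(o)\times[0,T]$ then gives $|\tilde v|\le\varepsilon$ everywhere, and letting $\varepsilon\to 0$ yields $v\equiv 0$.

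For existence, fix $T>0$ and an exhaustion $\{B_n(o)\}_{n\ge 1}$ of $M$. On each $B_n(o)\times(0,T)$ I would solve the initial--boundary value problem with datum $u_0$ at $t=0$ and a bounded continuous lateral boundary condition compatible with $u_0$; by {\bf (HP0)} standard parabolic theory produces $u_n\in C^{2,1}$, uniformly bounded in $L^\infty$ in terms of $\|u_0\|_\infty,\|f\|_\infty,\|c\|_\infty,T$. Interior Schauder estimates plus a diagonal extraction furnish a subsequence converging in $C^{2,1}_{\mathrm{loc}}$ to a bounded solution $u$ of \eqref{e2} on $M\times(0,T)$; a further diagonal argument in $T$ extends $u$ to all of $S$. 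To verify \eqref{e4}, for every $\theta_0\in\mathbb S^{m-1}$ and $\varepsilon>0$, exploiting the uniform continuity of $\tilde\gamma$ on $\mathbb S^{m-1}\times[0,T]$ and the compatibility \eqref{e5}, I would construct on a cone $\mathcal C^R_{\theta_0,\delta}$ (see \eqref{e510}) a parabolic supersolution of the form
\begin{equation*}
w^+(x,t)=\tilde\gamma(\theta_0,t)+\varepsilon+W(x),
\end{equation*}
where $W\ge 0$ is the elliptic barrier supplied by Section~\ref{SecBar}, which tends to $0$ as $r\to\infty$ inside the cone and blows up on its lateral wall. Condition \eqref{e13} together with {\bf (HP1)} ensures that $W$ dominates the lower-order contributions coming from $\partial_t\tilde\gamma$, $c\,\tilde\gamma$ and $f$, so that $w^+$ is genuinely a supersolution. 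By construction $w^+\ge u$ on the parabolic boundary of the truncated cone, hence the parabolic comparison principle yields $u\le w^+$; a symmetric subsolution furnishes the matching lower bound, and a compactness/covering argument on $\mathbb S^{m-1}\times[0,T]$ upgrades the resulting pointwise limit to the uniform convergence~\eqref{e4}.

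The main obstacle I expect lies in the construction of the time-dependent barrier $w^+$: the barriers of Section~\ref{SecBar} are elliptic and radial, while $\tilde\gamma(\theta,t)$ evolves with $t$, so one has to check that a single (suitably enlarged) radial barrier dominates $\partial_t\tilde\gamma$ and the inhomogeneity $f$ uniformly on $[0,T]$, while still vanishing at infinity. The integrability hypotheses on $\psi$ and $\underline a$ in {\bf (HP1)}, combined with \eqref{e13}, are calibrated precisely to provide this room, thereby reducing the parabolic barrier problem to its elliptic counterpart already treated in Theorem~\ref{teor1}.
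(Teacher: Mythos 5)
Your overall strategy (exhaustion by balls, uniform bounds, compactness, cone barriers for \eqref{e4}, and a maximum-principle argument on a large compact cylinder for uniqueness) is the same as the paper's, and your uniqueness argument is essentially identical to it. However, the core technical step --- the parabolic barrier on the cone --- contains genuine errors. First, you describe the barrier $W$ as radial, vanishing as $r\to\infty$ inside the cone and ``blowing up on its lateral wall''. No such object exists: a radial function takes the same value at radius $r$ inside the cone and on its lateral wall, so it cannot simultaneously vanish at infinity along the axis and blow up (or even stay bounded away from zero) on the wall. The barrier actually supplied by Lemma \ref{lemma4} is \emph{not} radial: it is $h(x;\theta_0)=\hat C V(r)+\operatorname{dist}_{\mathbb S^{m-1}}^2(\theta,\theta_0)$, whose angular term is what provides the positive lower bound $m_{\delta,R}$ on the lateral wall (see \eqref{e25}); and since this lower bound is only a constant, not $+\infty$, one must multiply $h$ by a large constant $K\geq(\|\tilde\gamma\|_\infty+K_T)/m_{\delta,R}$ to dominate the (merely bounded) approximants $u_j$ there. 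Your $w^+=\tilde\gamma(\theta_0,t)+\varepsilon+W(x)$ has no such amplification and therefore fails to lie above $u$ on the lateral boundary of the truncated cone, so the comparison principle cannot be invoked. Verifying that $h$ is a supersolution on the cone is also exactly where \eqref{e13} is used, via the estimate \eqref{e28} of $\Delta_{S_r}\operatorname{dist}_{\mathbb S^{m-1}}^2(\theta,\theta_0)$; this is not a detail one can wave at.

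Second, your supersolution contains $\tilde\gamma(\theta_0,t)$ as a function of $t$, and you propose to ``dominate $\partial_t\tilde\gamma$''; but $\tilde\gamma$ is only assumed continuous, so $\partial_t\tilde\gamma$ need not exist and $w^+$ need not be an admissible (i.e.\ $C^{2,1}_{x,t}$) supersolution. The paper circumvents this by freezing the datum at $(\theta_0,t_0)$, working only on the short time slab $[t_\delta,t_0]$ with $t_\delta=\max\{t_0-\delta,0\}$ on which $\tilde\gamma$ oscillates by at most $\epsilon$, and using barriers of the form $\pm K h(x;\theta_0)e^{\alpha t}\pm\lambda(t-t_0)^2+\tilde\gamma(\theta_0,t_0)\pm3\epsilon$, where the factor $e^{\alpha t}$ absorbs the zero-order term $c$ and the quadratic term $\lambda(t-t_0)^2$ handles the initial condition at $t=t_\delta$. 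Relatedly, you leave the lateral boundary data of the approximating problems unspecified; the paper takes them equal to $\tilde\gamma_1(r,\theta,t)=\tilde\gamma(\theta,t)$ (with a cutoff correction \eqref{e47} of the initial datum for compatibility), and this specific choice is what makes the comparison work on the outer cap $\partial N^j_\delta\cap\partial B_j$ in \eqref{e56}. These are not cosmetic omissions: without the angular term, the amplification constant, the time localization, and the correct approximating boundary data, the comparison argument does not close.
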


\begin{rem}\label{rem1}
Note that if $M\equiv M_\psi$ is a model, then $\omega(r)=\frac
1{\psi^2(r)}$. So, condition \eqref{e13} reads as follows
\begin{equation}\label{e13a}
\underline a(r) \leq \frac{\psi^2(r)}{C_0}   \quad \textrm{for
all}\;\, x=(r, \theta)\in M,\, r\geq R_0\,.
\end{equation}
\end{rem}

\begin{rem}\label{oss1f}
Note that if $\psi(r)=r$, and thus $M=\mathbb R^m$, conditions {\bf
(HP1)} and \eqref{e13a} cannot be simultaneously satisfied. Hence,
our results cannot be applied.
\end{rem}

\section{Construction of barriers at infinity}\label{SecBar}

\begin{lemma}\label{lemma3}
Let assumptions {\bf (HP0)}-{\bf (HP1)} be satisfied. Then there
exists a supersolution $V$ of equation
\begin{equation}\label{e14}
 a(x) \Delta\, V\, =\, -\, 1\quad\textrm{in}\;\;M\,,
\end{equation}
such that
\begin{equation}\label{e16}
V(x)>0 \quad \textrm{for all}\;\, x\in M\,,
\end{equation}
and
\begin{equation}\label{e15}
\lim_{r(x)\to \infty}\, V(x)\,=\,0\,.
\end{equation}
\end{lemma}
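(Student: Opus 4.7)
The plan is to build $V$ as a radial function $V(x)=v(r(x))$ where $r(x)=\dist(x,o)$, so the PDE reduces to an ODE for $v$. From \eqref{e6} one has $\Delta V = v''(r) + \mathcal F(r,\theta)\, v'(r)$, and by \textbf{(HP1)}(ii) together with the curvature comparison \eqref{e10} we have $\mathcal F(r,\theta) \ge (m-1)\psi'(r)/\psi(r)$. Therefore, as long as $v'(r) \le 0$,
\[
 \Delta V(x) \;\le\; v''(r) + (m-1)\frac{\psi'(r)}{\psi(r)} v'(r) \;=\; L_\psi v(r), \qquad L_\psi v := \frac{1}{\psi^{m-1}}\bigl(\psi^{m-1} v'\bigr)'.
\]
This structural observation reduces the problem to finding a non-increasing radial $v$ on $[R_0,\infty)$ with $L_\psi v \le -1/\underline a(r)$ and $v(r)\to 0$ at infinity.

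The natural choice is the explicit ODE solution
\[
 v(r) \;=\; \int_r^\infty \frac{1}{\psi^{m-1}(\xi)}\int_{R_0}^\xi \frac{\psi^{m-1}(s)}{\underline a(s)}\,ds\,d\xi, \qquad r\ge R_0,
\]
which satisfies $L_\psi v = -1/\underline a(r)$ with $v'(R_0)=0$. The key analytic point is convergence: swapping the order of integration by Fubini,
\[
 \int_{R_0}^\infty \frac{1}{\psi^{m-1}(\xi)}\int_{R_0}^\xi \frac{\psi^{m-1}(s)}{\underline a(s)}\,ds\,d\xi \;=\; \int_{R_0}^\infty \frac{\psi^{m-1}(s)}{\underline a(s)}\left(\int_s^\infty \frac{d\xi}{\psi^{m-1}(\xi)}\right)\, ds,
\]
which is finite by the second integrability condition in \textbf{(HP1)}(ii). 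Since $v(r)$ is the tail of a convergent integral, $v(r)\to 0$ as $r\to\infty$; moreover $v'(r) = -\psi^{-(m-1)}(r)\int_{R_0}^r \psi^{m-1}/\underline a < 0$ for $r>R_0$, so $v$ is strictly decreasing and hence strictly positive on $[R_0,\infty)$. By the structural inequality above $\Delta V \le L_\psi v = -1/\underline a(r)$, and combining with $a\ge\underline a$ from \textbf{(HP1)}(i) yields $a\,\Delta V \le -a/\underline a \le -1$ on $\{r\ge R_0\}$.

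It remains to extend $v$ to $[0,R_0]$ so that $V$ is a $C^2$ radial function on all of $M$; this forces $v'(0)=0$ together with $C^2$-matching at $r=R_0$. I would take the extension of the form $v_0-\lambda r^2 + O(r^4)$ near the origin, glued to the formula above on $[R_0,\infty)$ by a smooth interpolation, with $\lambda$ and $v_0$ large. On the compact set $\bar B_{R_0}(o)$ both $a$ (by \textbf{(HP0)}) and $\mathcal F(r,\theta)$ are uniformly controlled, and since $r\mathcal F(r,\theta)\to m-1$ as $r\to 0$ one computes $\Delta V \sim -2\lambda m$ near $o$, so $a\,\Delta V \le -1$ can be enforced on $\bar B_{R_0}(o)$ by choosing $\lambda$ large enough. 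Positivity of $V$ is then immediate from monotonicity and $v(\infty)=0$. The main obstacle I expect is this last step: carrying the supersolution inequality through the transition annulus while preserving $C^2$ regularity at the pole is conceptually routine but technically the most fiddly part; the rest of the argument is essentially a direct translation of the integrability hypothesis in \textbf{(HP1)}(ii) through the Fubini identity above.
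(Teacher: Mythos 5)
Your construction on $[R_0,\infty)$ is exactly the paper's: the same radial ansatz, the same explicit solution of $\frac{1}{\psi^{m-1}}(\psi^{m-1}v')'=-1/\underline a$, the same Fubini identity turning the second integrability condition of \textbf{(HP1)}(ii) into convergence of the double integral, and the same use of $\mathcal F\ge(m-1)\psi'/\psi$ together with $v'\le 0$ to pass from the model operator to $\Delta$. The step you dismiss as ``conceptually routine but fiddly'' is, however, where a genuine gap sits, and with your normalization it cannot be repaired by choosing constants large. Your outer solution has $v'(R_0)=0$, hence $v''(R_0)=-1/\underline a(R_0)<0$, so any $C^2$ extension across $r=R_0$ satisfies $v'(r)\approx-\frac{1}{\underline a(R_0)}(r-R_0)>0$ just inside $R_0$, destroying the monotonicity on which your own inequality $\Delta V\le v''+(m-1)\frac{\psi'}{\psi}v'$ depends. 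The obstruction is structural: if the interpolant satisfied $\bigl(\psi^{m-1}v'\bigr)'\le-\eps\,\psi^{m-1}$ on some interval $(R_0-\eta,R_0)$ with $v'(R_0)=0$, integrating from $r$ to $R_0$ would give $\psi^{m-1}(r)v'(r)\ge\eps\int_r^{R_0}\psi^{m-1}>0$, i.e.\ $v'>0$ there. So a two-piece construction whose outer piece is normalized by $v'(R_0)=0$ cannot simultaneously keep $v'\le0$ and the strict supersolution inequality up to the matching radius; the quadratic cap plus ``smooth interpolation'' does not close.

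The paper avoids gluing entirely. It extends $1/\underline a$ to a continuous positive function $a_0$ on $[0,\infty)$ (constant on $[0,R_0)$ and rescaled by a factor $\bar C$ built from $\min_{\overline B_{R_0}}a$ so that $a(x)\,a_0(r(x))\ge1$ holds on all of $M$, not just outside $B_{R_0}$), and then takes precisely your formula with the inner integral started at $0$: up to an additive constant, $V(r)=\int_r^\infty\psi^{1-m}(\xi)\int_0^\xi a_0(t)\psi^{m-1}(t)\,dt\,d\xi$. This single radial function is $C^2$ on $M$, has $V'(0)=0$ automatically and $V'<0$ on $(0,\infty)$, and satisfies $V''+(m-1)\frac{\psi'}{\psi}V'=-a_0(r)$ exactly on all of $(0,\infty)$, whence $a\Delta V\le-a\,a_0\le-1$ everywhere; positivity and the decay at infinity then follow from monotonicity as you argue. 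Replacing your two-piece construction by this one-line modification (extend $1/\underline a$ inward and integrate from $0$) closes your proof; everything else you wrote matches the paper's argument.
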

\begin{proof}
Define
$$a_0(r):=\left\{
\begin{array}{ll}
 \,   \,  \frac 1{ \bar C \underline a(R_0)} &\textrm{if}\,\,r\in [0, R_0)
\\&\\
\textrm{ }\frac 1{\bar C \underline a(r)} & \textrm{if}\,\,  r\in [R_0,\infty)\,;
\end{array}
\right. $$ here
$$\bar C:=\frac 1{\underline a(R_0)}\min\left\{\min_{\overline B_{R_0}} a, \underline a(R_0)\right\}\in (0, 1]\,. $$

Clearly, $a_0\in C([0,\infty))$; moreover, by assumption {\bf
(HP1)}-(i) and by the definition of $a_0(r)$ and $\bar{C}$,
\begin{equation}\label{e17}
a(x) \geq \frac 1{  a_0(r(x))}\quad\textrm{for every}\;\; x\in M\,.
\end{equation}

Note that for every $r>0$
\begin{equation}\label{e18}
\begin{aligned}
&\left(\int_{r}^\infty\frac{d\xi}{\psi^{m-1}(\xi)}\right)\left(\int_0^{r}
a_0(t)\psi^{m-1}(t) dt\right) -
\int_0^{r}\left(\int_t^\infty\frac{d\xi}{\psi^{m-1}(\xi)}\right)
 a_0(t)\psi^{m-1}(t)\, dt \\
&\qquad \le\int_{r}^\infty\frac{d\xi}{\psi^{m-1}(\xi)}\int_0^{r}
\frac{\int_t^\infty\frac{d\xi}{\psi^{m-1}(\xi)}}{\int_{r}^\infty
\frac{d\xi}{\psi^{m-1}(\xi)}}
 a_0(t)\psi^{m-1}(t) dt  - \int_0^{r}\int_t^\infty\frac{d\xi}{\psi^{m-1}(\xi)} a_0(t)\psi^{m-1}(t) dt= 0.
\end{aligned}
\end{equation}
From \eqref{e18} and hypothesis {\bf (HP1)} we get
$$H:=\limsup_{\rho\to \infty}\left\{\int_{\rho}^\infty\frac{d\xi}{\psi^{m-1}(\xi)}\int_0^{\rho}  a_0(t)\psi^{m-1}(t) dt - \int_0^{\rho}\int_t^\infty\frac{d\xi}
{\psi^{m-1}(\xi)} a_0(t)\psi^{m-1}(t) dt\right\} \le 0\,.$$

Define for every $x\in M$
\begin{equation}\label{e19}
\begin{aligned}
&V(x)\equiv V(r(x))\\
&\,\,\,:=\left(\int_{r(x)}^\infty\frac{d\xi}{\psi^{m-1}(\xi)}\right)\left(\int_0^{r(x)}
a_0(t)\psi^{m-1}(t)\, dt\right) -
\int_0^{r(x)}\left(\int_t^\infty\frac{d\xi}{\psi^{m-1}(\xi)}\right)
a_0(t)\psi^{m-1}(t) dt - H\,.
\end{aligned}
\end{equation}
We have that $V\in C^2(M)$. Furthermore, for every $r>0$
\begin{align}
\label{e20} V'(r)&=\,-\frac1{\psi^{m-1}(r)}\int_0^{r} a_0(t)
\psi^{m-1}(t) dt < 0,\\
\label{e21} V''(r)&=\,(m-1)\frac{\psi'(r)}{\psi^{m}(r)}\int_0^r
a_0(t) \psi^{m-1}(t) dt- a_0(r)\;.
\end{align}
Then $V'(0)=0$, $V''(0)=-\frac{a_0(0)}{m}<0$ and in view of
\eqref{e21}, {\bf (HP1)}, \eqref{e10}, \eqref{e6} and \eqref{e17} we
obtain
\begin{equation}\label{e22}
\begin{aligned}
a(x) \Delta V(x)& = a(x) \left[V''(r)+\mathcal F(r, \theta) V'(r) \right] \\ & \le a(x)\left[V''(r)+(m-1)\frac{\psi'(r)}{\psi(r)} V'(r)\right]\\
&\leq - a(x) a_0(r(x))\leq -1 \quad \textrm{for all}\;\; x\in M.
\end{aligned}
\end{equation}
Finally, it is easily checked that \eqref{e16} and \eqref{e15} are satisfied. This completes the proof.
\end{proof}

\bigskip

\begin{lemma}\label{lemma4}
Let assumptions {\bf (HP0)}-{\bf (HP1)} be satisfied. Suppose that
condition \eqref{e13} holds. Let $V$ be defined as in \eqref{e19}.
Then there exist $\hat R>0, \hat \delta>0, \hat C>0$ such that for
any $\theta_0\in \mathbb S^{m-1}$ the function
\begin{equation}\label{e23}
h(x; \theta_0):= \hat C V(r) +  \operatorname{dist}_{\mathbb S^{m-1}}^2(\theta, \theta_0)\quad (x=(r, \theta)\in M)
\end{equation} is a supersolution of equation
\begin{equation}\label{e23}
a(x)\Delta h(\cdot\, ; \theta_0)\, = - 1 \quad \textrm{in}\;\;
\mathcal C^{\hat R}_{\theta_0, \hat \delta}\,.
\end{equation}
Moreover,
\begin{equation}\label{e24}
h(x; \theta_0)>0 \quad \textrm{for all} \;\;  x\in \overline{\mathcal C^{\hat R}_{\theta_0, \hat\delta}}\,\;,
\end{equation}
and for any $0<\delta \leq \hat \delta$, $R\geq\hat{R}$ there exists
$m_{\delta,R}>0$ independent of $\theta_0$ such that
\begin{equation}\label{e25}
h(x;\theta_0)\,\geq m_{\delta,R}>\,0 \quad \textrm{for all} \;\,
x\in\partial\mathcal C^R_{\theta_0, \delta}\,.
\end{equation}
In addition,
\begin{equation}\label{e26}
\lim_{r\to \infty} h(r, \theta_0;\theta_0)\,=\,0 \quad
\textrm{uniformly w.r.t.}\,\, \theta_0\in \mathbb S^{m-1}\,.
\end{equation}
\end{lemma}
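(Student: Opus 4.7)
The plan is to take $h(x;\theta_0) = \hat C V(r) + \phi_{\theta_0}(\theta)$ with $\phi_{\theta_0}(\theta):=\operatorname{dist}^2_{\mathbb S^{m-1}}(\theta,\theta_0)$, so that the radial summand $\hat C V$ inherits the negative-Laplacian estimate from Lemma \ref{lemma3}, while the ``error'' produced by the angular summand can be absorbed thanks to $\omega(r)$ and condition \eqref{e13}.

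First I would expand $\Delta h$ via the polar decomposition \eqref{e6}. Because $V$ depends only on $r$ and $\phi_{\theta_0}$ only on $\theta$,
\[
\Delta h(x;\theta_0) \,=\, \hat C\bigl[V''(r) + \mathcal F(r,\theta)\,V'(r)\bigr] + \Delta_{S_r}\phi_{\theta_0}(\theta).
\]
From \eqref{e22} in the proof of Lemma \ref{lemma3} I already know that, on $\{r\ge R_0\}$,
\[
a(x)\bigl[V''(r) + \mathcal F(r,\theta)\,V'(r)\bigr] \,\le\, -a(x)\,a_0(r) \,=\, -\frac{a(x)}{\bar C\,\underline a(r)},
\]
so the radial term contributes a large negative quantity.

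Next I would bound the spherical Laplacian of $\phi_{\theta_0}$. Writing, in local coordinates on $\mathbb S^{m-1}$,
\[
\Delta_{S_r}\phi_{\theta_0} \,=\, A^{ij}\frac{\partial^2\phi_{\theta_0}}{\partial\theta^i\partial\theta^j} \,+\, \frac{\partial A^{ij}}{\partial\theta^i}\frac{\partial\phi_{\theta_0}}{\partial\theta^j} \,+\, \frac{1}{2}\,\frac{A^{ij}}{A}\,\frac{\partial A}{\partial\theta^i}\,\frac{\partial\phi_{\theta_0}}{\partial\theta^j},
\]
and fixing a finite atlas on $\mathbb S^{m-1}$, I would take $\hat\delta>0$ small enough (in particular $\hat\delta<\pi/2$) that every geodesic $\hat\delta$-ball in $\mathbb S^{m-1}$ lies in a single chart. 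Then $\phi_{\theta_0}$ is smooth on such a ball and its first and second partial derivatives are bounded by a constant $C_1(\hat\delta)$ independent of $\theta_0$; by the very definition of $\omega(r)$ this gives $|\Delta_{S_r}\phi_{\theta_0}|\le C_2(\hat\delta)\,\omega(r)$, and then \eqref{e13} yields
\[
a(x)\,\Delta_{S_r}\phi_{\theta_0} \,\le\, \frac{C_2}{C_0}\cdot\frac{a(x)}{\underline a(r)} \qquad (r\ge R_0).
\]

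Combining the two estimates,
\[
a(x)\,\Delta h \,\le\, \frac{a(x)}{\underline a(r)}\left(\frac{C_2}{C_0} - \frac{\hat C}{\bar C}\right),
\]
so I would choose $\hat C$ so large that $\tfrac{\hat C}{\bar C}-\tfrac{C_2}{C_0}\ge 1$ and set $\hat R\ge R_0$. Since $a(x)\ge\underline a(r)$ for $r\ge R_0$ by {\bf (HP1)}-(i), this gives $a(x)\Delta h\le -1$ on $\mathcal C^{\hat R}_{\theta_0,\hat\delta}$. The remaining assertions are then immediate: \eqref{e16} implies $h>0$ on $\overline{\mathcal C^{\hat R}_{\theta_0,\hat\delta}}$, which is \eqref{e24}; on $\partial\mathcal C^R_{\theta_0,\delta}$ either $r=R$ and $h\ge \hat C V(R)$, or $\operatorname{dist}_{\mathbb S^{m-1}}(\theta,\theta_0)=\delta$ and $h\ge \delta^2$, so \eqref{e25} holds with $m_{\delta,R}:=\min\{\hat C V(R),\,\delta^2\}>0$ (independent of $\theta_0$, since $V$ depends only on $r$); finally $h(r,\theta_0;\theta_0)=\hat C V(r)\to 0$ as $r\to\infty$ by \eqref{e15}, uniformly in $\theta_0$, giving \eqref{e26}.

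The main obstacle is the coordinate-dependence hidden in $\omega(r)$: one must fix an atlas on $\mathbb S^{m-1}$ and shrink $\hat\delta$ so that the $\hat\delta$-ball around every $\theta_0$ is contained in a single chart, thereby making the bound $|\Delta_{S_r}\phi_{\theta_0}|\le C_2(\hat\delta)\,\omega(r)$ uniform in $\theta_0$. Once this is arranged, condition \eqref{e13} is precisely what is needed to absorb the angular error into the negative radial contribution coming from Lemma \ref{lemma3}.
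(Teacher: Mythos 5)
Your proposal is correct and follows essentially the same route as the paper: split $\Delta h$ into the radial part controlled by the estimate $a(x)\Delta V\le -a(x)a_0(r)$ from Lemma \ref{lemma3} and the angular part $\Delta_{S_r}\operatorname{dist}^2_{\mathbb S^{m-1}}(\cdot,\theta_0)$, bound the latter by a constant times $\omega(r)$ and hence by $\tfrac{C}{C_0}\tfrac{1}{\underline a(r)}$ via \eqref{e13}, and absorb it by taking $\hat C$ large; the boundary and limit assertions follow exactly as you say, with $m_{\delta,R}=\min\{\delta^2,\hat C V(R)\}$. Your explicit attention to choosing $\hat\delta<\pi/2$ so that $\operatorname{dist}^2_{\mathbb S^{m-1}}(\cdot,\theta_0)$ is smooth with derivative bounds uniform in $\theta_0$ is a point the paper leaves implicit, and is a welcome clarification rather than a deviation.
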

\begin{proof}
In view of \eqref{e27} and \eqref{e13}, for each $r>0, \theta_0\in \mathbb S^{m-1}$ we have that
\begin{equation}\label{e28}
\begin{aligned}
\Delta_{S_r} \operatorname{dist}_{\mathbb S^{m-1}}^2(\theta,
\theta_0) &= \frac 1{\sqrt{A(r,
\theta)}}\frac{\partial\big[\sqrt{A(r, \theta)} A^{ij}(r,
\theta)\big]}{\partial
\theta^i}\frac{\partial\operatorname{dist}_{\mathbb
S^{m-1}}^2(\theta, \theta_0) }{\partial \theta^j} + A^{ij}(r,
\theta)\frac{\partial^2 \operatorname{dist}_{\mathbb
S^{m-1}}^2(\theta, \theta_0)}{\partial \theta^i \partial
\theta^j}\\& \leq C \max_{i,j=1, \ldots, m-1,\ \theta\in \mathbb
S^{m-1}} \left\{\left|\frac{\partial A^{ij}(r, \theta)}{\partial
\theta^i}\right|+\frac 1 2 \frac{|A^{ij}(r, \theta)|}{A(r,
\theta)}\left|\frac{\partial A(r, \theta)}{\partial \theta^i}\right|
+ |A^{ij}(r, \theta)|\right\}\\ &\leq \frac
C{C_0}\frac{1}{\underline a(r)}\,\,\textrm{whenever}\,\,\, \theta\in
\mathbb S^{m-1},\, \operatorname{dist}_{\mathbb S^{m-1}}(\theta,
\theta_0)<\hat \delta\,,
\end{aligned}
\end{equation}
for some positive constants $C, \hat \delta$ independent of $r, \theta, \theta_0.$ From \eqref{e17}, \eqref{e22}, and \eqref{e28} we deduce that
\begin{equation}\label{e29}
\begin{aligned}
a(x) \Delta h(x; \theta_0) &\leq a(x)\left[- \hat C a_0(r(x)) + \frac{C}{C_0}\frac 1{\underline a(r(x))}\right]\\
& \leq  a(x)\left(-\hat C +\frac{C}{C_0}\right)a_0(r(x)) \leq -1 \quad \textrm{for all}\,\; x\in \mathcal C^{\hat R}_{\theta_0, \hat \delta}\,,
\end{aligned}
\end{equation}
provided $\hat C\geq \frac C{C_0}+1$ and $\hat{R}>R_0$. Hence
\eqref{e23} has been shown. Finally, \eqref{e24}, \eqref{e25},
\eqref{e26} follow by the very definition of $h$, with
$m_\delta=\min\{\delta^2,\hat{C}V(R)\}$.
\end{proof}

\section{Proof of Theorem \ref{teor1}}\label{dim1}
\begin{proof}[Proof of Theorem \ref{teor1}]
By standard results (see, e.g. \cite{GT}), for any $j\in \mathbb N$ there exists a unique classical solution $u_j\in C^{2}(B_j)\cap
C(\overline{B_j})$ of problem
\begin{equation}\label{e30}
\left\{
\begin{array}{ll}
\,  a\Delta u_j + c u_j = f  &\textrm{in}\,\,B_j
\\& \\
\textrm{ }u_j \, = \gamma_1 & \textrm{on\ \ } \partial B_j \,,
\end{array}
\right.
\end{equation}
where $$\gamma_1(x)\equiv  \gamma_1(r, \theta):=\gamma(\theta) \quad \textrm{for all} \;\; r>0, \theta\in \mathbb S^{m-1}\,.$$

\smallskip

Let $V$ be the supersolution provided by Lemma \ref{lemma3} and
consider $W=V+1$. Since $W\geq1$ on $M$, we have that for any $j\in
\mathbb N$ the function
\[\overline u:= \bar C W\]
is a supersolution to problem \eqref{e30}, provided
\[\bar C\geq \max\{\|f\|_{\infty}, \|\gamma\|_{\infty}\}\,.\]
Analogously, we have that for any $j\in \mathbb N$, the function $\underline u:= -\overline u$ is a subsolution to the same problem. By the
comparison principle, for any $j\in \mathbb N$,
\begin{equation}\label{e32}
|u_j|\leq \bar C \|W \|_{\infty}=: \tilde C\quad \textrm{in}\;\;
B_j\,.
\end{equation}
By usual compactness arguments (see, e.g., \cite{GT}), there exists a subsequence $\{u_{j_k}\}\subset \{u_j\}$ and a function $u\in C^{2}(M)$ such
that
\[u:=\lim_{k\to \infty} u_{j_k}\quad \textrm{in}\;\; M\,.\]
Moreover, $u$ solves equations \eqref{e1}. In the sequel, we still denote by $\{u_j\}$ the sequence $\{u_{j_k}\}$.

\smallskip

Now, we show that \eqref{e3} holds. In order to do this, fix any
$\theta_0\in \mathbb S^{m-1}.$ For any $R>0, \delta>0, j>R$ define
\[ N^j_\delta\,\equiv\,N^{j,  R}_{\delta, \theta_0} \,:=\, \mathcal C^{R}_{\theta_0,\delta} \cap B_j\,,\]
We shall prove some estimates in $N^j_\delta$, by constructing suitable supersolutions and subsolutions to problem
\begin{equation}\label{e33}
\left\{
\begin{array}{ll}
\,   a\Delta v + c v= 0  &\textrm{in}\,\,N^{j,  R}_{\delta,
\theta_0}
\\& \\
\textrm{ } v \, = 0 & \textrm{on\ \ } \partial N^{j,  R}_{\delta, \theta_0} \,,
\end{array}
\right.
\end{equation}
and then using the comparison principle.

\smallskip
Let $\hat R$ and $\hat\delta$ be given by Lemma \ref{lemma4}. Fix
any $0<\epsilon<1$. Define, for some $K>0$ to be fixed later,
\[\overline v(x):= K h(x; \theta_0) + \gamma(\theta_0)+\epsilon - u_j(x)\quad \big(x\in \overline{N^j_{\hat\delta}}\big)\,,\]
where for every $\delta\in(0,\hat\delta]$ we set $N^j_{\delta}\equiv
N^{j,  \hat R}_{\delta, \theta_0}$. Note that there exists
$0<\delta=\delta(\epsilon)\leq\hat \delta$ such that for all
$j\in\mathbb N$, $x\equiv(r,\theta)\in
\partial N^j_\delta\cap
\partial B_j$
\begin{equation}\label{e34}
|u_j(x) - \gamma(\theta_0)| =|\gamma(\theta)-\gamma(\theta_0)|<\epsilon\,.
\end{equation}
Observe that, since $\gamma\in C(\mathbb S^{m-1})$ and $\mathbb S^{m-1}$ is compact, such a $\delta=\delta(\epsilon)$ does not depend on $\theta_0$.
From \eqref{e34} we obtain
\begin{equation}\label{e35}
\overline v(x) \geq 0 \quad \textrm{for all}\;\; x\in \partial
N^j_\delta,\, r(x)=j\,.
\end{equation}

From \eqref{e32} and \eqref{e25} we get that
\begin{equation}\label{e36}
\overline v(x)\geq K m_{\delta,\hat{R}} + \gamma(\theta_0) +\epsilon
- \tilde C \geq 0\quad \textrm{for all}\;\; x\in \partial
N^j_\delta,\, \hat R<r(x)<j,
\end{equation}
choosing
\begin{equation}\label{e37}
K \geq \frac{\|\gamma \|_{\infty}+ \tilde C}{m_{\delta,\hat{R}}}\,,
\end{equation}
where $\tilde C$ is defined by \eqref{e32}; hence, $K$ also depends
on $\delta(\epsilon)$. From \eqref{e32} and \eqref{e25} we can infer
that, if \eqref{e37} holds, then
\begin{equation}\label{e38}
\overline v(x) \geq 0\quad \textrm{for all}\;\; x\in \partial
N^j_\delta,\, r(x)=\hat R\,.
\end{equation}
Moreover, for all $x\in N^j_\delta$
\begin{equation}\label{e39}
a\Delta  \overline v + c \overline v \leq - K + c K h +
\|c\|_{\infty}(\|\gamma\|_{\infty}+ 1)-f\leq 0\,,
\end{equation}
for
\begin{equation}\label{e40}
K\geq \|c\|_{\infty}(\|\gamma \|_{\infty}+ 1)+\|f\|_\infty\,.
\end{equation}
Now, choose $\delta>0$ so small that \eqref{e34} holds and $K$ so
that \eqref{e37} and \eqref{e40} hold. Hence, from \eqref{e35},
\eqref{e36}, \eqref{e38} and \eqref{e39}, the function $\overline v$
is a supersolution of problem \eqref{e33}. So, by the comparison
principle,
\[\overline v \geq 0 \quad \textrm{in}\;\; N^j_\delta\,.\]
Therefore,
\begin{equation}\label{e41}
u_j \leq K h + \gamma(\theta_0)+\epsilon \quad \textrm{in}\;\; N^j_\delta\,.
\end{equation}

Similarly we can show that, for the same
$\delta=\delta(\varepsilon)$ as in the previous calculations (see
\eqref{e34}), the function
\[\underline v(x):= - K h(x;\theta_0) + \gamma(\theta_0) - \epsilon - u_j(x)\quad \big(x\in \overline{N^j_\delta}\big)\,\]
is a subsolution to problem \eqref{e33}. Hence, by the comparison principle,
\[\underline v \leq 0 \quad \textrm{in}\;\; N^j_\delta\,.\]
Therefore,
\begin{equation}\label{e42}
u_j \geq - K h + \gamma(\theta_0) - \epsilon \quad \textrm{in}\;\;
N^j_\delta\,.
\end{equation}
Letting $j\to \infty$ in \eqref{e41} and \eqref{e42} we obtain
\[-K h(x;\theta_0) -\epsilon \leq u(x) - \gamma(\theta_0)\leq K h(x;\theta_0) + \epsilon \quad \textrm{for any}\;\; x\equiv(r,\theta)
\in\mathcal C^{\hat R}_{\theta_0, \delta}\,.\] In view of
\eqref{e26}, taking $\theta=\theta_0$, letting $r\to \infty$, and
$\epsilon\to 0^+$, we obtain that $u(r,\theta_0)\to\gamma(\theta_0)$
uniformly for $\theta_0\in \mathbb S^{m-1}$.

It remains to prove the uniqueness of the solution. To do this, suppose, by contradiction, that there exist two solutions $u_1$ and $u_2$ of equation
\eqref{e1} satisfying condition \eqref{e2}. So, $w:= u_1-u_2$ solves equation
\begin{equation}\label{e43}
a\Delta w \,+\, c w \,=\,0\quad \textrm{in}\;\;\, M\,;
\end{equation}
moreover,
\begin{equation}\label{e44}
\lim_{r(x)\to \infty} w(x)=0\,.
\end{equation}
Fix any $\epsilon>0$. In view of \eqref{e44}, there exists $R_\epsilon>0$ such that for any $R>R_\epsilon$
\begin{equation}\label{e45}
w \leq \epsilon\quad \textrm{on}\;\; \partial B_R\,.
\end{equation}
Thus, $u$ is a subsolution to problem
\[
\left\{
\begin{array}{ll}
\,   a\Delta w + c w \,=\, 0&\textrm{in}\,\, B_R
\\& \\
\textrm{ } w \, = \epsilon & \textrm{on\ \ } \partial B_R\,.
\end{array}
\right.
\]
By the comparison principle,
\[  w \leq \epsilon \quad \textrm{in}\;\; B_R\,. \]
Letting $R\to\infty$ and $\epsilon \to 0^+$, we have
\[ w \leq 0 \quad \textrm{in}\;\; M\,.\]
Similarly, it can be shown that $w\geq 0$ in $M$. So, $u_1 \equiv u_2.$ This completes the proof of Theorem \ref{teor1}.
\end{proof}

\section{Proof of Theorems \ref{teor2}}\label{dim2}
Here and in the following, $\{\zeta_j\}\subset C^\infty_c(B_j)$ will
be a sequence of functions such that, for each $j\in \mathbb N$,
$0\leq \zeta_j\leq 1$, $\zeta_j\equiv 1$ in $B_{j/2}\,.$

\medskip

\begin{proof}[Proof of Theorem \ref{teor2}] Fix any $T>0$. For any $j\in \mathbb N$ let $u_j\in C^{2,1}_{x,t}({B}_j\times (0,T])\cap
C(\overline{B}_j\times[0, T])$ be the unique solution (see, e.g., \cite{LSU}) of problem
\begin{equation}\label{e46}
\left\{
\begin{array}{ll}
\,   \partial_t u_j = a\Delta u_j + c u_j + f
&\textrm{in}\,\,B_j\times (0,T],
\\& \\
\textrm{ }u_j \, = \tilde \gamma_1 &\textrm{in\ \ } \partial B_j\times (0, T] \,,
\\& \\
\textrm{ } u_j \,= u_{0,j} &\textrm{in \ \ } B_j\times \{0\}\,,
\end{array}
\right.
\end{equation}
where $$\tilde \gamma_1(x,t)\equiv \tilde \gamma_1(r, \theta, t)=:\tilde \gamma(\theta,t)\quad \textrm{for all}\;\; r>0, \theta\in \mathbb S^{m-1},
t\in [0,T]\,;$$ furthermore,
\begin{equation}\label{e47}
u_{0,j}(x):=\zeta_j(x) u_0(x) + [1-\zeta_j(x)]\tilde \gamma_1(x,0)\quad \textrm{for all}\;\; x\in\overline{B}_j\,.
\end{equation}

\smallskip
It is easily seen that the function
\[\overline{v}(x,t):= C e^{\beta t} \quad \big((x,t)\in M\times [0,T]\big)\]
is a supersolution of problem \eqref{e46} for any $j\in \mathbb N$, provided that
\[\beta\geq
1+\|c \|_\infty,\;\; C \geq \max\{\|f\|_\infty,
\|\tilde\gamma\|_\infty, \|u_0\|_\infty\}\,.\] Thus, by the
comparison principle,
\begin{equation}\label{e48}
u_j(x,t)\leq \bar v(x,t) \quad \textrm{for all}\;\; (x,t)\in M\times [0,T]\,.
\end{equation}
Furthermore, the function
\[\underline{v}(x,t):= - C e^{\beta t} \quad \big((x,t)\in M\times [0,T]\big)\]
is a subsolution of problem \eqref{e46} for any $j\in \mathbb N$. Thus, by the comparison principle,
\begin{equation}\label{e49}
u_j(x,t)\geq \underline{v}(x,t) \quad \textrm{for all}\;\; (x,t)\in M\times [0,T]\,.
\end{equation}
From \eqref{e48}-\eqref{e49} we obtain
\begin{equation}\label{e50}
\big|u_j(x,t)\big|\leq C e^{\beta  T}=: K_T \quad \textrm{for all}\;\; (x,t)\in M\times [0,T]\,.
\end{equation}
By usual compactness arguments (see, e.g., \cite{LSU}), there exists
a subsequence $\{u_{j_k}\}\subseteq \{u_j\}$ which converges, as
$k\to \infty,$ to a solution $u\in C^2(M\times(0,T ])\cap
C(M\times[0,T])$ of problem \eqref{e2}.

\medskip

We claim that \eqref{e4} holds. In fact, fix any $\theta_0\in
\mathbb S^{m-1}$, $t_0\in [0, T],$ and $0<\epsilon<1$. Let $\hat R$
and $\hat \delta$ be defined as in Lemma \ref{lemma4}. Let \[
t_\delta:=\max\{t_0-\delta, 0\}\, \quad \textrm{for
any}\,\,0<\delta\leq \hat \delta\,. \] Then there exists a positive
constant $0<\delta=\delta(\epsilon)<\hat \delta$ such that
\begin{equation}\label{e51}
\tilde \gamma(\theta_0, t_0)-\epsilon \leq \tilde \gamma(\theta,
t)\leq \tilde \gamma(\theta_0, t_0)+\epsilon\quad
\textrm{whenever}\;\, \dist_{\mathbb
S^{m-1}}(\theta,\theta_0)<\delta,\, t\in [t_\delta, t_0]\,.
\end{equation}
Note that $\tilde \gamma$ is continuous in the compact set $\mathbb S^{m-1}\times [0,T]$, thus such a $\delta=\delta(\epsilon)$ does not depend on
$\theta_0$ and $t_0$. Furthermore, due to \eqref{e5}, there exists $R_\epsilon>0$ such that
\begin{equation}\label{e51a}
\tilde\gamma(\theta,0)-\epsilon \leq u_0(r,\theta) \leq
\tilde\gamma(\theta, 0) +\epsilon \quad \textrm{for all}\;\;
x=(r,\theta)\in M\setminus B_{R_\epsilon}\,.
\end{equation}

Consider the function
\begin{equation}\label{e52}
\underline w(x,t):=- K h(x;\theta_0)e^{ \alpha t}-\lambda (t-t_0)^2
+\tilde \gamma(\theta_0, t_0)-3\epsilon\,, \quad (x,t)\in
Q_\delta^j:=N_\delta^j\times [ t_\delta, t_0]\,,
\end{equation}
with $N_\delta^j=C^R_{\theta_0,\delta}\cap B_j$, where
$R>\max\{\hat{R},R_\epsilon\}$ and where $K>0$, $\alpha>0$,
$\lambda>0$ are constants to be chosen later. We get
\[
a\Delta \underline w +  c \underline w \geq K e^{\alpha t}\, -\, K
c\,  h(x;\theta_0) e^{\alpha t}-\|c \|_\infty(\|\tilde \gamma
\|_\infty + \lambda T^2 + 3) \quad \textrm{in}\;\; Q^j_\delta\,.
\]
Therefore,
\begin{equation}\label{e53}
\begin{aligned}
\partial_t \underline w - a\Delta \underline w -  c \underline w - f\, \leq&\, - \alpha \, K h(x,\theta_0) e^{\alpha t}-
2 \l(t-t_0)- K e^{\alpha t}+ c K h(x,\theta_0) e^{\alpha t}\\ &\,
+\, \|c\|_\infty (\|\tilde \gamma\|_\infty +\lambda T^2 +3)+\|f
\|_\infty\leq 0 \qquad\qquad \textrm{in}\;\; N^j_\delta\times
(t_\delta, t_0)\,,
\end{aligned}
\end{equation}
if
\begin{equation}\label{e54}
\alpha\geq \|c \|_\infty\,,
\end{equation}
and
\begin{equation}\label{e55}
K\geq 2  \lambda T + \|f\|_\infty+\| c\|_\infty(\|\tilde
\gamma\|_\infty+ \lambda T^2+ 3)+\|f\|_\infty\,.
\end{equation}
Furthermore, it follows from \eqref{e51} that for $j>R$
\begin{equation}\label{e56}
\underline w(x,t) \leq u_j(x, t) = \tilde \gamma(\theta,t)\quad
\textrm{for}\;\; x=(r,\theta)\in \partial N^j_\delta\cap \partial
B_j,\, t\in (t_\delta, t_0)\,.
\end{equation}
Let $m_\epsilon=m_{\delta,R}>0$ be the constant appearing in
inequality \eqref{e25}, relative to
$\partial\mathcal{C}^R_{\theta_0,\delta}$ (recall that here
$\delta=\delta(\epsilon)$ and $R=R(\epsilon)$). From \eqref{e25} and
\eqref{e50} we can infer that
\begin{equation}\label{e57}
\underline w (x,t)\leq - K m_\epsilon + \|\tilde \gamma\|_\infty
\leq u_j(x,t)\quad \textrm{for all}\;\; x\in \partial \mathcal
C^R_{\theta_0, \delta}\cap B_j,\, t\in (t_\delta, t_0),
\end{equation}
for
\begin{equation}\label{e58}
K\geq \frac{\| \tilde \gamma\|_\infty+ K_T}{m_\epsilon}\,.
\end{equation}
Now, suppose that $t_\delta>0$. From \eqref{e50} we have that
\begin{equation}\label{e59}
\underline w(x, t_\delta) \leq u(x,  t_\delta)\,,\;\, x\in
N_\delta^j
\end{equation}
if
\begin{equation}\label{e60}
\lambda \geq \frac{\| \tilde \gamma\|_\infty + K_T}{\delta^2}\,.
\end{equation}
On the other hand, if $t_\delta=0$ (this is always the case when
$t_0=0$), then from \eqref{e47}, \eqref{e51} and \eqref{e51a} we
have that
\begin{equation}\label{e61}
\begin{aligned}
\underline
w(x,0)\,&\leq\,\tilde\gamma(\theta_0,t_0)-3\epsilon\,\leq\,\tilde\gamma(\theta_0,0)-2\epsilon\\
&\leq\, u_0(r,\theta)-\epsilon\leq u_{0,j}(r,\theta)\,=\,u_j(x,0)
\quad \textrm{for all}\;\;\,
x=(r,\theta)\in\mathcal{C}^R_{\theta_0,\delta}\cap B_j\,.
\end{aligned}
\end{equation}

Now, suppose that  \eqref{e54}, \eqref{e55}, \eqref{e58} hold;
moreover, assume \eqref{e60}, if $t_\delta>0$. From \eqref{e53},
\eqref{e56}, \eqref{e57}, and \eqref{e59} if $t_\delta>0$ or
\eqref{e61} if $t_\delta=0$, it follows that $\underline w$ is a
subsolution of problem
\begin{equation}\label{e62}
\left\{
\begin{array}{ll}
\,   \partial_t w = a\Delta w + c w + f &\textrm{in}\,\,Q^j_\delta
\\& \\
\textrm{ }w \, = u_j &\textrm{in\ \ } \partial N_\delta^j\times
(t_\delta, t_0] \,,
\\& \\
\textrm{ } w \,= u_j &\textrm{in \ \ } N_\delta^j\times
\{t_\delta\}\,.
\end{array}
\right.
\end{equation}
On the other hand, $u_j$ is a solution of the same problem. Then by the maximum principle we have
\begin{equation}\label{e63}
\underline w \leq u_j \quad \textrm{in}\;\; Q^j_\delta\,.
\end{equation}

Analogously we have that
\begin{equation}\label{e51b}
u_j \leq \overline w \quad \textrm{in}\;\; Q^j_\delta\,,
\end{equation}
where
\begin{equation}\label{e64} \overline w(x,t):=K h(x;\theta_0)
e^{\alpha t}+\lambda (t-t_0)^2+\tilde \gamma(\theta_0,
t_0)+3\epsilon\,,\;\, (x,t)\in Q^j_\delta.
\end{equation}

\medskip
Finally, from \eqref{e63} and \eqref{e51b} we have that for any
$x\equiv (r,\theta)\in N^j_\delta$ and $t\in [t_\delta, t_0]$, with
$0<\epsilon<1$, $j>R>\max\{\hat{R},R_\epsilon\}$ and
$0<\delta<\min\{\hat{\delta},\delta(\epsilon)\}$,
\begin{equation}\label{e65}
\big|u_j(x,t)-\tilde \gamma(\theta_0, t_0) \big|\, \leq\, K h(x;
\theta_0) e^{\alpha t}+ \l (t-t_0)^2+3\epsilon\,.
\end{equation}
Note that these constants depend on $\epsilon$, but do not depend on
$\theta_0\in\mathbb S^{m-1}$, $t_0\in[0,T]$. Now we pass to the
limit as $j\to \infty$ in \eqref{e65}, and choose $\theta=\theta_0$,
$t=t_0$. So, for every $r>R$,
\begin{equation}\label{e66}
 \big|  u(r,\theta_0, t_0)-\tilde \gamma(\theta_0, t_0)\big| \leq K
h(r,\theta_0;\theta_0) e^{\alpha t_0}+3\epsilon\,.\end{equation} In
view of \eqref{e66} and \eqref{e26}, we have
\[\big|u(r,\theta_0, t_0)-\tilde \gamma(\theta_0, t_0)\big| < 4\epsilon\]
for $r>0$ large enough, independent of $\theta_0\in\mathbb S^{m-1}$,
$t_0\in[0,T]$. But $\epsilon>0$ is arbitrarily small, therefore
\eqref{e4} follows.

In order to prove uniqueness, suppose by contradiction that there
exist two solutions $u_1$, $u_2$ of problem \eqref{e2} satisfying
\eqref{e4}. Then set $w:= u_1-u_2$. Take any $\epsilon>0$. In view
of \eqref{e4}, there exists $R_\epsilon>0$ such that
\begin{equation}\label{e80}
|w(x,t)|\leq \epsilon \quad \textrm{for all}\;\; x\in M\setminus
B_{R_\epsilon},\, t\in [0, T]\,.
\end{equation}
Moreover, $w$ is a subsolution of problem
\begin{equation}\label{e67}
\left\{
\begin{array}{ll}
\,\partial_t v = a\Delta v  + c v
&\textrm{in}\,\,B_{R_\epsilon}\times (0, T]
\\& \\
\textrm{ }v \, = \epsilon & \textrm{in\ \ } \partial B_{R_\epsilon}\times (0, T] \,
\\& \\
\textrm{ }v \, = 0 & \textrm{in\ \ } B_{R_\epsilon}\times \{0\} \,.
\end{array}
\right.
\end{equation}
It is easily seen that the function
$$z(x,t):= \epsilon \, e^{\|c\|_\infty t}\quad \big(x\in M, t\in [0, T]\big) $$
is a supersolution of problem \eqref{e67}. By the comparison principle,
\begin{equation}\label{e68}
w(x,t)\,\leq\,  z(t)\,\leq\,\epsilon e^{\|c\|_\infty T} \quad
\textrm{for all}\;\; x\in \overline{B}_{R_\epsilon},\, t\in [0,
T]\,.
\end{equation}
Similarly, it can be shown that
\begin{equation}\label{e69}
w(x,t)\geq - z(t) \,\geq\,-\epsilon e^{\|c\|_\infty T} \quad
\textrm{for all}\;\; x\in \overline{B}_{R_\epsilon},\, t\in [0,
T]\,.
\end{equation}
Then, from \eqref{e80}, \eqref{e68} and \eqref{e69}, we see that for
some positive constant $\Lambda$ we have
\[|w(x,y)|\leq\Lambda\epsilon\quad \textrm{for all}\;\; x\in M,\, t\in [0,
T]\,.
\]
Letting $\epsilon \to 0^+$, we get $w\equiv0$ in $M\times [0, T]\,.$
Hence the proof is complete.
\end{proof}

\section{Examples}\label{SecEXE}
\begin{exe}
Let $M=M_\psi$ be a model manifold with $\psi\in \mathcal A$, see
\eqref{e8}, and let $\psi(r)\sim e^{r^{\alpha}}$ as $r\to \infty$
for some $\alpha>0$. Note that for $\mathbb H^m$ we have $\alpha=1$.
In general, there holds
\[\int_1^{\infty}\left(\int_r^\infty \frac{d\xi}{\psi^{m-1}(\xi)}\right) \frac {\psi^{m-1}(r)}{\underline a(r)}\,dr < \infty,\]
provided that
\begin{equation}\label{e70}
\int_1^\infty \frac 1{r^{\alpha-1}\underline a(r)}\,dr <\infty\,.
\end{equation}
Hence, if \eqref{e70} holds, and
\[\underline a(r)\leq C_0  e^{2r^{\alpha}}\quad \textrm{for all}\,\, r\geq R_0\]
for some $R_0>0$, then Theorems \ref{teor1} and \ref{teor2} apply.

Moreover, note that in the special case when
$$
a(r(x))\equiv \underline a(r)\equiv 1\,,
$$
condition \eqref{e70} is satisfied for any $\alpha>2$\,. Furthermore, observe that if $\psi=e^{r^{\alpha}}$ for all $r\geq r_0$, for some $r_0>0,$
then we have
\[K_\omega(x) \sim - \alpha^2 [r(x)]^{2\alpha -2}\quad \textrm{as}\;\,\,r(x)\to \infty\,.\]
\end{exe}

\begin{exe}
Let $M$ be a Cartan-Hadamard manifold. Suppose that, for some $\alpha>0$,
\begin{equation}\label{e74}
\operatorname{K}_\omega(x)\leq -\alpha^2 \quad \textrm{for all}\;\, x\in M\,.
\end{equation}
Now note that by defining $\psi(r):=\frac{\sinh(\alpha r)}{\alpha}$
we have $\psi\in\mathcal{A}$ and
$\frac{\psi''(r)}{\psi(r)}=\alpha^2$. Let $\omega(r)$ be defined as
in \eqref{e13} and suppose that
\begin{equation}\label{e72}
\int_1^{\infty}{\omega(r)} dr <\infty\,,
\end{equation}
then {\bf(HP1)}-$(ii)$ and condition \eqref{e13} hold, if we set
$\underline{a}(r):=\frac{1}{\omega(r)}$.

Consider equation \eqref{e1} with $c=f\equiv 0$, namely equation
\begin{equation}\label{e73a}
a(x) \Delta u = 0\quad \textrm{on}\;\, M\,,
\end{equation}
which is of course equivalent to equation
\begin{equation}\label{e71}
\Delta u = 0\quad \textrm{on}\;\, M\,.
\end{equation}

Since $a(x)$ is arbitrary, we can choose $a\in
C^\sigma_{\text{loc}}(M)$ for some $\sigma>0$, with
\begin{equation}\label{e77}
a(x)\geq\underline a(r(x))=\frac 1{\omega(r(x))} \quad \textrm{for
all}\,\, x\in M\setminus B_{R_0}\,,
\end{equation}
for some $R_0>0$. Thus, Theorem \ref{teor1} applies for equation
\eqref{e73a}, and so also for equation \eqref{e71}\,. Moreover,
Theorem \ref{teor2} can be applied for the parabolic problem
\eqref{e2} with $c=f=0$ and $a\in C^\sigma_{\text{loc}}(M)$ such
that \eqref{e77} is satisfied.

As already noted in Remark \ref{rem1}, if $M\equiv M_\psi$ is a
model manifold then $\omega(r)=\frac 1{\psi^2(r)}\,.$ Clearly,
\eqref{e74} and \eqref{e72} are satisfied on $\mathbb H^m\,.$
\end{exe}

\begin{exe}
Let $M\equiv M_\phi$ be a model manifold. Suppose that
\begin{equation}\label{e75}
\operatorname{K}_\omega(x)\leq - \frac{A}{r^2 \log(r)}\quad
\textrm{for all}\,\, x\in M\setminus B_{R_0}\,,
\end{equation}
for some $A>1$, $R_0>0$. By \cite[Proposition 3.4]{Choi}, for any
$\beta\in(1,A)$ there exists some $R_1\geq R_0$ such that
\begin{equation}\label{e76}
\phi(r) \geq  \psi(r)\quad \textrm{for all}\,\, r\geq R_1\,,
\end{equation}
where $\psi(r):=r \log^{\beta}(r)$. Moreover, for some $R_2>0$ large
enough,
\begin{equation}\label{e81}
\operatorname{K}_\omega(x)\leq -\frac{\psi''(r)}{\psi(r)}\quad
\textrm{for all}\,\, x\in M\setminus B_{R_2}\,.
\end{equation}

Now choose $a\in C^\sigma_{\text{loc}}(M)$, for some $\sigma>0$,
with
\begin{equation}\label{e78}
a(x)\geq\underline a(r(x))=C_0\psi^2(r(x)) \quad \textrm{for
all}\,\, x\in M\setminus B_{R_2}\,,
\end{equation}
for some $C_0>0$. In view of \eqref{e76}, \eqref{e81} and the very
definition of $\psi$ it is easily seen that hypothesis {\bf (HP1)}
and condition \eqref{e13} are satisfied. Thus, Theorem \ref{teor1}
applies for equation \eqref{e73a}, and hence also for equation
\eqref{e71}\,. This is in accordance with \cite[Theorem 3.6]{Choi}.
Moreover, Theorem \ref{teor2} can be applied for the parabolic
problem \eqref{e3} with $c=f=0$ and $a$ defined as in \eqref{e78}.

\end{exe}

\vspace{1cm}

\noindent Paolo Mastrolia\\
Universit\`{a} degli Studi di Milano\\
Dip di Matematica\\
via Saldini 50\\
20133 Milano. \vspace{0,5cm}

\noindent Dario D. Monticelli\\
Politecnico di Milano\\
Dip. di Matematica\\
via Bonardi 9\\
20133 Milano. \vspace{0,5cm}

\noindent Fabio Punzo\\
Universit\`{a} della Calabria\\
Dip. di Matematica e Informatica\\
via Bucci, Cubo 31B\\
87036 Rende (CS).

\end{document}